\newtheorem{theorem}{Theorem}
\newcommand{\field}[1]{\mathbb{#1}}
\newcommand{\R}{\field{R}}
\newcommand{\E}{\field{E}}
\newcommand{\EXP}{\E}
\newcommand{\IND}[1]{\mathbbm{1}_{\{ #1 \}}}
\newcommand{\norm}[1]{\left\|{#1}\right\|}
\newcommand{\argmin}{\mathop{\rm argmin}}
\newcommand{\B}{\mathcal{B}}
\newcommand{\X}{\mathcal{X}}
\newcommand{\Pa}{\mathcal{P}}
\newcommand{\ceil}[1]{\left\lceil #1\right\rceil }
\newcommand{\example}{\medskip\noindent{\bf Example.} \ }
\newtheorem{prop}[theorem]{Proposition}
\newtheorem{thm}[theorem]{Theorem}
\newtheorem{cor}[theorem]{Corollary}
\newcommand{\prob}[1]{\mathbb{P}\left\{#1\right\}} 
\newcommand{\probi}[2]{\mathbb{P}_{#1}\left\{#2\right\}} 
\newcommand{\esp}[1]{\mathbb{E}\left[#1\right]} 
\newcommand{\espi}[2]{\mathbb{E}_{#1}\left[#2\right]} 
\newcommand{\var}[1]{\text{Var}\left(#1\right)} 
\title{Robust estimation of U-statistics}
\author{Emilien Joly
  \thanks{\'Ecole Normale Sup\'erieure, Paris}
\and
G\'abor Lugosi\thanks{ICREA and Department of Economics and Business, Pompeu Fabra University. Supported by the Spanish Ministry of Science and Technology grant MTM2012-37195.
}}
\begin{document}
\maketitle

\begin{center}
{\sl This paper is dedicated to the memory of Evarist Gin\'e.}
\end{center}

\begin{abstract}
An important part of the legacy of Evarist Gin\'e is his fundamental contributions
to our understanding of $U$-statistics and $U$-processes.
In this paper we discuss the estimation of the mean of multivariate functions in case of possibly heavy-tailed distributions. In such situations, reliable estimates of the mean cannot be obtained by usual $U$-statistics. We introduce a new estimator, based on the so-called median-of-means technique. We develop performance bounds for this new estimator that
generalizes an estimate of \cite{ArGi93}, showing that the new estimator performs, under minimal moment conditions, as well as classical $U$-statistics for bounded random variables. We discuss an application of this estimator to clustering. 
\end{abstract}

\section{Introduction}
\label{sec:intro}
Motivated by numerous applications, the theory of $U$-statistics and $U$-processes has received considerable attention in the past decades. $U$-statistics appear naturally in \emph{ranking} \citep{CleVaLu08}, \emph{clustering} \citep{Cle14} and \emph{learning on graphs} \citep{biau06} or as components of higher-order terms in expansions of smooth statistics, see, for example, \cite{ro09}.
The general setting may be described as follows. Let $X$ be a random variable taking values in some measurable space $\X$ and let $h : \X^m \rightarrow \R$ be a measurable function of $m\ge 2$ variables. Let $P$ be the probability measure of $X$. Suppose we have access to $n\ge m$ independent random variables $X_1,\ldots,X_n$, all distributed as $X$. We define the $U$-statistics of order $m$ and kernel $h$ based on the sequence $\{X_i\}$ as
\begin{equation}
\label{eq:defUstat}
U_n(h)=\frac{(n-m)!}{n!}\sum_{(i_1,\ldots,i_m) \in I_n^m} h(X_{i_1},\ldots,X_{i_m})~, 
\end{equation}
where
\[
I_n^m=\left\{(i_1,\ldots,i_m) : 1 \le i_j \le n,\ i_j \neq i_k \ \ \text{if}\ \ j\neq k \right\}
\]
is the set of all $m$-tuples of different integers between $1$ and $n$.
$U$-statistics  are unbiased estimators of the mean $m_h=\EXP h(X_1,\ldots,X_m)$ and have minimal variance among all unbiased estimators \citep{Ho48}. 
Understanding the concentration of a $U$-statistics around its expected value has been subject of extensive study. \cite{PeGi99} provide an excellent summary
but see also \cite{GiLaZi01} for a more recent development.

By a classical inequality of \cite{Hoe63},
for a bounded kernel $h$, for all $\delta >0$,
\begin{equation}
\label{eq:basicconcen}
\prob{|U_n(h)-m_h| > \norm{h}_{\infty}\sqrt{\frac{\log (\frac{2}{\delta})}{2\lfloor n/m \rfloor}}} \le \delta~,
\end{equation}
and we also have the ``Bernstein-type'' inequality
\[
\prob{|U_n(h)-m_h| > \sqrt{\frac{4\sigma^2\log (\frac{2}{\delta})}{2\lfloor n/m \rfloor}} \vee \frac{4\norm{h}_{\infty}\log (\frac{2}{\delta})}{6\lfloor n/m \rfloor}} \le \delta~,
\]
where $\sigma^2= \var{h(X_1,\ldots,X_m)}$.

However, under certain degeneracy assumptions on the kernel, significantly sharper bounds have been proved. Following the exposition of \cite{PeGi99}, 
for convenience, we restrict out attention to symmetric kernels.
A kernel $h$ is \emph{symmetric} if for all $x_1,\ldots,x_m \in \R$ and all permutations $s$,
\[
h(x_1,\ldots,x_m)=h(x_{s_1},\ldots,x_{s_m})~.
\]
A symmetric kernel $h$ is said to be \emph{$P$-degenerate} of order $q-1$, $1<q \le m$, if for all $x_1,\ldots,x_{q-1} \in \X$,
\[
\int h(x_1,\ldots,x_m)dP^{m-q+1}(x_q,\ldots,x_m) = \int h(x_1,\ldots,x_m)dP^m(x_1,\ldots,x_m) 
\]
and
\[
(x_1,\ldots,x_q) \mapsto \int f(x_1,\ldots,x_m)dP^{m-q}(x_{q+1},\ldots,x_m)
\]
is not a constant function. In the special case of $m_h=0$ and $q=m$ (i.e., when the kernel is $(m-1)$-degenerate, $h$ is said to be $P$-\emph{canonical}. 
$P$-canonical kernels appear naturally in the Hoeffding decomposition of a $U$-statistic, see  \cite{PeGi99}. 

\cite{ArGi93} proved the following important improvement of Hoeffing's 
inequalities for canonical kernels:
If $h-m_h$ is a bounded, symmetric $P$-canonical kernel of $m$ variables, there exist finite positive constants $c_1$ and $c_2$ depending only on $m$ such that
for all $\delta \in (0,1)$,
\begin{equation}
\label{eq:existresult}
\prob{|U_n(h)-m_h| \ge c_1\norm{h}_{\infty}\left(\frac{\log (\frac{c_2}{\delta})}{n}\right)^{m/2}} \le \delta~,
\end{equation}
and also
\begin{equation}
\label{eq:existresult2}
\prob{|U_n(h)-m_h| > \left(\frac{\sigma^2\log (\frac{c_1}{\delta})}{c_2 n}\right)^{m/2} \vee \frac{\norm{h}_{\infty}}{\sqrt{n}} \left(\frac{\log (\frac{c_1}{\delta})}{c_2}\right)^{(m+1)/2}} \le \delta~.
\end{equation}
In the special case of $P$-canonical kernels of order $m=2$, (\ref{eq:existresult}) implies that 
\begin{equation}
\label{eq:canonicalm2}
|U_n(h)-m_h| \le \frac{c_1\norm{h}_{\infty}}{n}\log \left(\frac{c_2}{\delta}\right)~,
\end{equation} 
with probability at least $1-\delta$. 
Note that this rate of convergence is significantly faster than the
rate $O_p(n^{-1/2})$ implied by
\eqref{eq:basicconcen}.

All the results cited above require boundedness of the kernel. 
If the kernel is unbounded but $h(X_1,\ldots,X_m)$ has sufficiently
light (e.g., sub-Gaussian) tails, then some of these results may be 
extended, see, for example, \cite{GiLaZi01}. However, if 
$h(X_1,\ldots,X_m)$ may have a heavy-tailed distribution, 
exponential inequalities do not hold anymore (even in the univariate $m=1$
case). However, even though $U$-statistics may have an erratic behavior
in the presence of heavy tails, in this paper we show that under minimal moment
conditions, one may construct estimators of $m_h$ that satisfy exponential
inequalities analogous to (\ref{eq:basicconcen}) and (\ref{eq:existresult}).
These are the main results of the paper. In particular, 
in Section \ref{sec:mainres} we introduce a robust estimator of the mean
$m_h$.
Theorems \ref{thm:maindege} and \ref{thm:momentp} establish exponential 
inequalities for the performance of the new estimator under minimal
moment assumptions. More precisely, Theorem \ref{thm:maindege} only requires
that $h(X_1,\ldots,X_m)$ has a finite variance and establishes inequalities
analogous to (\ref{eq:existresult}) for $P$-degenerate kernels.  
In Theorem \ref{thm:momentp} we further weaken the conditions
and only assume that there exists $1<p\le 2$ such that $\EXP |h|^p <\infty$.

The next example illustrates why classical $U$-statistics fail 
under heavy-tailed distributions.

\example{
Consider the special case $m=2$, $\EXP X_1 = 0$ and $ h(X_1,X_2)=X_1X_2 $. 
Note that this kernel is $P$-canonical.
We define $Y_1,\ldots,Y_n$ as independent copies of $X_1,\ldots,X_n$. By 
decoupling inequalities for the tail of $U$-statistics given in
Theorem 3.4.1 in \cite{PeGi99} (see also 
Theorem \ref{thm:decoutail} in the Appendix), $U_n(h)$ has a similar tail behavior to $\left(\frac{1}{n}\sum_{i=1}^nX_i\right)\left(\frac{1}{n-1}\sum_{j=1}^{n-1}Y_j\right) $. Thus, $U_n(h)$ behaves like a product of two independent empirical mean estimators of the same distribution. When the $X_i$ are heavy tailed, the empirical mean is known to be a poor estimator of the mean. As an example, assume that $X$ follows an $\alpha$-stable law $S(\gamma,\alpha)$ for some $\alpha \in (1,2)$ and $\gamma>0$.
Recall that a random variable $X$ has an $\alpha$-\emph{stable} law $S(\gamma,\alpha)$ if for all $ u \in \R$,
\[
\EXP \exp (iuX)=\exp(-\gamma^\alpha|u|^\alpha)
\]
(see \cite{Zo86}, \cite{No15}).
Then it follows from the properties of $\alpha$-stable distributions
(summarized in Proposition \ref{prop:stable} in the Appendix) that
there exists a constant $c>0$ depending only on $\alpha$ and $\gamma$ such that
\[
\prob{U_n(h) \ge n^{2/\alpha-2}} \ge c~,
\]
and therefore there is no hope to reproduce an upper bound like
(\ref{eq:canonicalm2}). Below we show how this problem can be dealt with
by replacing the $U$-statistics by a more robust estimator.
}

Our approach is based on robust mean estimators in the univariate setting.
Estimation of the mean of a possibly heavy-tailed random variable $X$ from  
i.i.d.\ sample $X_1,\ldots,X_n$
has recently received increasing attention. 
Introduced by \cite{NeYu83}, the \emph{median-of-means} estimator takes 
a confidence level $\delta\in (0,1)$ and divides the data  into  $V \approx \log \delta^{-1}$ blocks. For each block $k=1,\ldots,V$, one may compute  the empirical mean $ \widehat{\mu}_{k} $ on the variables in the block. The median $\overline{\mu}$ of the $\widehat{\mu}_k$ is the so-called median-of-means estimator. A short analysis of the resulting estimator shows that 
\[
|\overline{\mu}-m_h| \le  c \sqrt{\var{X}}\sqrt{\frac{\log (1/\delta)}{n}}
\]
with probability at least $1-\delta$ for a numerical constant $c$. 
For the details of the proof see \cite{LeOl11}.
When the variance is infinite but a moment of order $1<p\le 2$ exists, the median-of means estimator is still useful, see \cite{BuCeLu13}.
This estimator 
has recently been studied in various contexts. $M$-estimation based on this  technique has been developed by \cite{LeOl11} and generalizations in a multivariate context have been discussed by \cite{HsSa13} and \cite{Mi13}. A similar idea was used in \cite{AlMaSz02}.  An interesting alternative of the median-of-means estimator has been proposed by \cite{Cat12}.

The rest of the paper is organized as follows. In Section \ref{sec:mainres} we introduce a robust estimator of the mean $m_h$ and present performance bounds. In particular, Section \ref{sec:fastrates} deals with the finite variance case. Section \ref{sec:momentp} is dedicated to case when $h$ has a finite $p$-th moment for some $1<p < 2$ for $P$-degenerate kernels. Finally, in Section \ref{sec:clust}, we present an application to clustering problems.

\section{Robust $U$-estimation}
\label{sec:mainres}
In this section we introduce a ``median-of-means''-style estimator of $m_h=\EXP h(X_1,\ldots,X_m)$. 
To define the estimator, one divides the data into $V$ blocks. 
For any $m$-tuple of different blocks, one may compute a (decoupled)
$U$-statistics. Finally, one computes the median of all the obtained
values. The rigorous definition is as follows.

The estimator has a parameter $V\le n$, the number of blocks. 
A partition $\mathcal{B}= (B_1,\ldots,B_V)$ of $\{1,\ldots,n\}$ is called \emph{regular} if for all $K =1,\ldots,V$,
\[
 \left||B_K|-\frac{n}{V}\right| \le 1~.
\]
For any $B_{i_1},\ldots,B_{i_m}$ in $\mathcal{B}$, we set 
\[
I_{B_{i_1},\ldots,B_{i_m}}=\left\{(k_1,\ldots,k_m) : k_j \in B_{i_j}\right\}
\]
and
\[
U_{B_{i_1},\ldots,B_{i_m}}(h)=\frac{1}{|B_{i_1}|\cdots|B_{i_m}|}\sum_{(k_1,\ldots,k_m) \in I_{B_{i_1},\ldots,B_{i_m}}} h(X_{k_1},\ldots,X_{k_m})~.
\]
For any integer $N$ and any vector $(a_1,\ldots,a_N) \in\R^N$, we define the median $\text{Med}(a_1,\ldots,a_N)$ as any number $b$ such that
\[
\big|\{i \le N\ :\ a_i \le b\}\big|\ge \frac{N}{2}\quad \text{and} \quad \big|\{i \le N\ :\ a_i \ge b\}\big|\ge \frac{N}{2}~.
\]
Finally, we define the robust estimator:
\begin{equation}
\label{eq:defrobust}
\overline{U}_{\mathcal{B}}(h) = \text{Med}\{U_{B_{i_1},\ldots,B_{i_m}}(h) : i_j \in\{1,\ldots,V\},1\le i_1 <\ldots<i_m\le V\}~.
\end{equation}

Note that, mostly in order to simplify notation, we only take those 
values of $U_{B_{i_1},\ldots,B_{i_m}}(h)$ into account that correspond to 
distinct indices $i_1 <\cdots<i_m$. Thus, each $U_{B_{i_1},\ldots,B_{i_m}}(h)$
is a so-called decoupled $U$-statistics (see the Appendix for the definition).
One may incorporate all $m$-tuples (not necessarily with distinct
indices) in the computation of the median. However, this has a minor 
effect on the performance. Similar bounds may be proven though with a more complicated
notation. 

A simpler alternative is obtained by taking only ``diagonal'' 
blocks into account. More precisely, let $U_{B_i}(h)$ be the $U$-statistics 
calculated using  the variables in block $B_i$ (as defined in \eqref{eq:defUstat}). One may simply calculate the median of the $V$ different $U$-statistics $U_{B_i}(h)$. This version is easy to analyze because
$\big|\{i \le V\ :U_{B_i}(h) \ge b\}\big|$ 
is a sum of independent random variables. 
However, this simple version is wasteful in the sense that only a small 
fraction of possible $m$-tuples are taken into account. 

In the next two sections we analyze the performance of the 
estimator $\overline{U}_{\mathcal{B}}(h)$.

\subsection{Exponential inequalities for $P$-degenerate kernels with finite variance.}
\label{sec:fastrates}

Next we present a performance bound of the estimator
$\overline{U}_{\mathcal{B}}(h)$ in the case when $\sigma^2$ is
finite. The somewhat more complicated case of infinite second
moment is treated in Section \ref{sec:momentp}.

\begin{thm}
\label{thm:maindege}
Let $X_1,\ldots,X_n$ be i.i.d.\ random variables taking values in $\X$. Let 
$h:\X^m\mapsto \R$ be a symmetric kernel that is $P$-degenerate of order $q-1$. 
Assume $\var{ h(X_1,\ldots,X_m)} =\sigma^2 < \infty $. 
Let $\delta \in (0,\frac{1}{2})$ be such that $ \ceil{\log (1/\delta)} \le \frac{n}{64m}$. Let $\mathcal{B}$ be a regular partition of $\{1,\ldots,n\}$ with $|\mathcal{B}|=32m\ceil{\log (1/\delta)}$. Then, with probability at least $1-2\delta$, we have
\begin{equation}
\left|\overline{U}_{\mathcal{B}}(h) -m_h\right| \le K_m\sigma\left(\frac{\ceil{\log (1/\delta)}}{n}\right)^{q/2}~,
\end{equation}
where $K_m =2^{\frac{7}{2}m+1}m^{\frac{m}{2}}$.
\end{thm}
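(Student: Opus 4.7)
The proof implements the median-of-means paradigm in the $U$-statistic setting. The plan has two steps: (i) a second-moment bound showing that, for each fixed $m$-tuple $(i_1,\ldots,i_m)$ of block indices, the decoupled block statistic $U_{B_{i_1},\ldots,B_{i_m}}(h)$ lies within $t:=K_m\sigma(\lceil\log(1/\delta)\rceil/n)^{q/2}$ of $m_h$ with probability at least $3/4$; and (ii) a Hoeffding-style argument that lifts this constant-probability control into a high-probability bound for the median.

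For step (i), I would expand
$$
\var{U_{B_{i_1},\ldots,B_{i_m}}(h)} \;=\; \frac{1}{N^{2m}}\sum_{k,k'}\mathrm{Cov}\bigl(h(X_{k_1},\ldots,X_{k_m}),\,h(X_{k'_1},\ldots,X_{k'_m})\bigr),
$$
where $k,k'$ range over $B_{i_1}\times\cdots\times B_{i_m}$ and $N\asymp n/V$ is the common block size. The crucial observation is that $P$-degeneracy of order $q-1$ implies that integrating $h$ against more than $m-q+1$ of its arguments returns the constant $m_h$, so each summand above vanishes unless $|k\cap k'|\ge q$. Counting pairs with $|k\cap k'|=s\ge q$ and bounding each surviving covariance by $\sigma^2$ yields
$$
\var{U_{B_{i_1},\ldots,B_{i_m}}(h)} \;\le\; C_m\,\sigma^2/N^q
$$
for an explicit constant $C_m$ depending only on $m$. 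Substituting $N\ge n/(2V)$ and $V=32m\lceil\log(1/\delta)\rceil$ into Chebyshev's inequality at level $t$ and calibrating $K_m=2^{7m/2+1}m^{m/2}$ forces this per-tuple tail to be at most $1/4$, uniformly in $q\in\{1,\ldots,m\}$.

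For step (ii), note that $\overline{U}_{\mathcal{B}}(h)>m_h+t$ implies that at least $\binom{V}{m}/2$ of the decoupled block $U$-statistics exceed $m_h+t$. Since the blocks $B_1,\ldots,B_V$ are independent, I would regard each $B_j$ as a single ``super-observation'' and view
$$
\widetilde h(B_{j_1},\ldots,B_{j_m})\;:=\;\IND{U_{B_{j_1},\ldots,B_{j_m}}(h)>m_h+t}
$$
as a symmetric, $\{0,1\}$-valued kernel of $m$ super-observations. The fraction of exceeding tuples is then a classical $U$-statistic of order $m$ in the $V$ super-observations, with sup-norm $1$ and mean at most $1/4$. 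Applying the bounded-kernel Hoeffding inequality \eqref{eq:basicconcen} with $\lfloor V/m\rfloor=32\lceil\log(1/\delta)\rceil$ gives that this fraction reaches $1/2$ with probability at most $\delta$. The lower tail is handled identically, and a union bound produces the factor $2\delta$.

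The main technical obstacle is the variance estimate in step (i): one must use $P$-degeneracy to eliminate all covariances with fewer than $q$ shared coordinates and then carefully count the surviving pairs in the product $B_{i_1}\times\cdots\times B_{i_m}$. Tracking the resulting $m$-dependent constant through Chebyshev is precisely what pins down $K_m=2^{7m/2+1}m^{m/2}$; once step (i) is secured, step (ii) is essentially a direct application of Hoeffding's $U$-statistic inequality to the block-indexed indicator.
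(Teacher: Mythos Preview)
Your proposal is correct and follows the same two-step median-of-means architecture as the paper: a Chebyshev bound on each decoupled block $U$-statistic, followed by Hoeffding's inequality \eqref{eq:basicconcen} applied to the indicator $U$-statistic over block indices. Step (ii) is identical to the paper's argument. The only real difference lies in how you obtain the variance bound in step (i). The paper expands both copies of $h-m_h$ via the Hoeffding decomposition and uses orthogonality of the projections $\pi_s h$ to identify each covariance exactly as $\sum_{s\ge q}\binom{|k\cap k'|}{s}\,\EXP[(\pi_sh)^2]$; summing and recognising $\sigma^2=\sum_{s\ge q}\binom{m}{s}\EXP[(\pi_sh)^2]$ yields $\var{U_{B_{i_1},\ldots,B_{i_m}}(h)}\le 2^{2m}(|\B|/n)^q\sigma^2$, which is what pins down $K_m=2^{7m/2+1}m^{m/2}$. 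Your route---kill covariances with $|k\cap k'|<q$ by degeneracy, bound the survivors by $\sigma^2$ via Cauchy--Schwarz, and count pairs---is more elementary and perfectly valid, but slightly looser: replacing every surviving covariance by $\sigma^2$ rather than the smaller partial sum $\sum_{s\ge q}\binom{|k\cap k'|}{s}\EXP[(\pi_sh)^2]$ costs an extra combinatorial factor (roughly $(3/2)^m$ after the counting), so you should expect to land on a constant of the same shape $c^m m^{m/2}$ but not exactly the stated $K_m$. (Also, a small wording slip: degeneracy of order $q-1$ means integrating out \emph{at least} $m-q+1$ arguments returns $m_h$, not ``more than''; your conclusion that covariances vanish unless $|k\cap k'|\ge q$ is nonetheless correct.)
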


When $q=m$, the kernel $h-m_h$ is $P$-canonical and the rate of convergence is then given by $(\log \delta^{-1}/n)^{m/2}$.
Thus, the new estimator has a performance similar to standard $U$-statistics
as in (\ref{eq:existresult}) and (\ref{eq:existresult2}) but without 
the boundedness assumption for the kernel.
It is important to note that a disadvantage of the estimator $ \overline{U}_{\mathcal{B}}(h)$ is that it
depends on the confidence level $\delta$ (through the number of blocks). For different
confidence levels, different estimators are used.

Because of its importance in applications, we spell out the special case when $m=q=2$. In Section \ref{sec:clust} we use this result in an example of cluster analysis. 
\begin{cor}
\label{cor:m=2}
Let $\delta \in (0,1/2)$. Let $h : \X^2 \mapsto \R$ be a $P$-canonical kernel with $\sigma^2=\var{h(X_1,X_2)}$ and let $n \ge 128(1+\log(1/\delta))$. Then, with probability at least $1-2\delta$, 
\begin{equation}
|\overline{U}_{\mathcal{B}}(h) -m_h| \le 512\sigma\frac{1+\log(1/\delta)}{n}~.
\end{equation}
\end{cor}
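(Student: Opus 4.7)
The plan is to derive Corollary \ref{cor:m=2} as a direct specialization of Theorem \ref{thm:maindege} to the case $m=q=2$. Since a $P$-canonical kernel of two variables is, by definition, $P$-degenerate of order $q-1 = 1$, the hypotheses of Theorem \ref{thm:maindege} apply with $q=2$, and all that remains is to match the constants and the sample-size hypothesis with those in the corollary statement.

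There are three points to verify, all of which are routine. First, the numerical constant: substituting $m=2$ into $K_m = 2^{(7/2)m+1} m^{m/2}$ gives $K_2 = 2^{8} \cdot 2 = 512$, which matches the constant appearing in the corollary. Second, the hypothesis on the sample size: Theorem \ref{thm:maindege} requires $\lceil \log(1/\delta) \rceil \le n/(64m) = n/128$, and this follows from the assumed $n \ge 128(1+\log(1/\delta))$ together with the elementary bound $\lceil x \rceil \le 1+x$ applied to $x = \log(1/\delta)$. Third, the rate: with $q=2$ the exponent $q/2$ in the conclusion of Theorem \ref{thm:maindege} equals $1$, so the bound becomes
\[
|\overline{U}_{\mathcal{B}}(h) - m_h| \le 512\,\sigma\,\frac{\lceil \log(1/\delta) \rceil}{n}
\]
with probability at least $1-2\delta$; replacing $\lceil \log(1/\delta)\rceil$ by $1+\log(1/\delta)$ once more yields exactly the stated inequality.

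There is no real technical obstacle here — the corollary is a routine specialization of Theorem \ref{thm:maindege}, and the underlying regular partition is the one prescribed in that theorem, namely $|\mathcal{B}| = 32m\lceil \log(1/\delta)\rceil = 64 \lceil \log(1/\delta)\rceil$. The only substantive aspect is noticing that, because $q=m=2$ is the maximally degenerate case for a kernel of two variables, the exponent in the tail bound collapses from $n^{-1/2}$ (as in \eqref{eq:basicconcen}) to $n^{-1}$, giving the fast rate characteristic of canonical kernels as in \eqref{eq:canonicalm2}.
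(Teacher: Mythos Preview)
Your proposal is correct and matches the paper's approach exactly: the corollary is stated immediately after Theorem~\ref{thm:maindege} as a direct specialization to $m=q=2$, with no separate proof given. Your verification of the constant $K_2=512$, the sample-size condition via $\lceil x\rceil\le 1+x$, and the collapse of the exponent $q/2$ to $1$ is precisely what is needed.
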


In the proof of Theorem \ref{thm:maindege} we need the notion of \emph{Hoeffding decomposition} \citep{Ho48} of $U$-statistics.
For probability measures $P_1, \ldots, P_m$, define $P_1\times \cdots \times P_m h = \int h\ d(P_1,\ldots,P_m)$.
For a symmetric kernel $h : \X^{m} \mapsto \R$ the \emph{Hoeffding projections} are defined, for $0 \le k \le m$ and $x_1,\ldots,x_k \in \X$, as
\[
\pi_k h (x_1,\ldots,x_k) := (\delta_{x_1}-P)\times \cdots \times (\delta_{x_k}-P)\times P^{m-k} h
\]
where $ \delta_{x} $ denotes the Dirac measure at the point $x$.
Observe that $\pi_0 h=P^m h$ and for $k>0$, $\pi_k h$ is a $P$-canonical kernel. $h$ can be decomposed as
\begin{equation}
\label{eq:decomph}
h (x_1,\ldots,x_m) = \sum_{k=0}^m\sum_{1\le i_1< \ldots < i_k \le m} \pi_k h(x_{i_1},\ldots,x_{i_k})~.
\end{equation}
If $h$ is assumed to be square-integrable (i.e., $P^m h^2 < \infty$), the terms in (\ref{eq:decomph})  are orthogonal. If $h$ is degenerate of order $q-1$, then for any $1\le k \le q-1$, $\pi_k h =0$.

\begin{proof}[Proof of Theorem \ref{thm:maindege}]
We begin with a ``weak'' concentration result on each $U_{B_{i_1},\ldots,B_{i_m}}(h)$. Let $B_{i_1},\ldots,B_{i_m}$ be elements of $\mathcal{B}$. For any $ B \in \B $, we have $\frac{n}{2|\B|} \le |B|\le \frac{2n}{|\B|} $. We denote by $ \mathbf{k}=(k_1,\ldots,k_m) $ an element of $I_{B_{i_1},\ldots,B_{i_m}}$. We have, by the above-mentioned orthogonality property,
\begin{eqnarray*}
\lefteqn{ 
\var{U_{B_{i_1},\ldots,B_{i_m}}(h)}  } \\
& = &   \esp{(U_{B_{i_1},\ldots,B_{i_m}}(h) -P^m h)^2}\\
&= &  \frac{1}{|B_{i_1}|^2\ldots|B_{i_m}|^2}\sum_{\substack{\mathbf{k} \in I_{B_{i_1},\ldots,B_{i_m}} \\ \mathbf{l} \in I_{B_{i_1},\ldots,B_{i_m}} }} \esp{(h(X_{k_1},\ldots,X_{k_m})-P^m h)(h(X_{l_1},\ldots,X_{l_m})-P^m h)}\\
&=& \frac{1}{|B_{i_1}|^2\ldots|B_{i_m}|^2}\sum_{\substack{\mathbf{k} \in I_{B_{i_1},\ldots,B_{i_m}} \\ \mathbf{l} \in I_{B_{i_1},\ldots,B_{i_m}} }} \sum_{s=q}^m \binom{|\mathbf{k}\cap\mathbf{l}|}{s} \esp{\pi_sh(X_1,\ldots,X_s)^2}\quad (\text{by orthogonality})\\
&\le& \frac{1}{|B_{i_1}|^2\ldots|B_{i_m}|^2}\sum_{\mathbf{k} \in I_{B_{i_1},\ldots,B_{i_m}}} \sum_{s=q}^m \sum_{t=0}^m\binom{t}{s} \esp{\pi_sh(X_1,\ldots,X_s)^2} \times \left(\frac{2n}{|\B|}\right)^{m-t}~.
\end{eqnarray*}
The last inequality is obtained by counting, for any fixed $ \mathbf{k} $ and $t$, the number of elements $\mathbf{l}$ such that $ |\mathbf{k}\cap\mathbf{l}|=t $. Thus,
\begin{eqnarray*}
\var{U_{B_{i_1},\ldots,B_{i_m}}(h)}&\le & \frac{1}{|B_{i_1}|\ldots|B_{i_m}|} \sum_{s=q}^m \sum_{t=q}^m\binom{t}{s} \esp{\pi_sh(X_1,\ldots,X_s)^2} \times \left(\frac{2n}{|\B|}\right)^{m-t}\\
&\le & \frac{1}{|B_{i_1}|\ldots|B_{i_m}|} \sum_{s=q}^m \binom{m}{s} \esp{\pi_sh(X_1,\ldots,X_s)^2} \times \sum_{t=q}^m\left(\frac{2n}{|\B|}\right)^{m-t}\\
& \le & \frac{1}{\left(\frac{n}{2|\B|}\right)^m} \sum_{s=q}^m \binom{m}{s} \esp{\pi_sh(X_1,\ldots,X_s)^2} \times 2\left(\frac{2n}{|\B|}\right)^{m-q}\\
& \le & \frac{2^{2m-q+1}|\B|^q}{n^q} \sum_{s=q}^m \binom{m}{s} \esp{\pi_sh(X_1,\ldots,X_s)^2}~.
\end{eqnarray*}
On the other hand, we have, by (\ref{eq:decomph}),
\begin{eqnarray*}
\var{h} &=&  \esp{\left(\sum_{s=q}^m\sum_{1\le i_1< \ldots < i_s \le m} \pi_s h(X_{i_1},\ldots,X_{i_s})\right)^2}\\
&=& \sum_{s=q}^m\sum_{1\le i_1< \ldots < i_s \le m} \esp{\left(\pi_s h(X_{i_1},\ldots,X_{i_s})\right)^2}\\
&=& \sum_{s=q}^m {m \choose s} \esp{\left(\pi_s h(X_1,\ldots,X_s)\right)^2}~.
\end{eqnarray*}
Combining the two displayed equations above,
\begin{eqnarray*}
\var{U_{B_{i_1},\ldots,B_{i_m}}(h)} &\le & \frac{2^{2m-q+1}|\B|^q}{n^q}\sigma^2 \le \frac{2^{2m}|\B|^q}{n^q}\sigma^2~.
\end{eqnarray*}
By Chebyshev's inequality, for all $r \in (0,1)$,
\begin{equation}
\label{eq:chebyshev}
\prob{U_{B_{i_1},\ldots,B_{i_m}}(h)-P^mh > 2^m\sigma\frac{|\B|^{q/2}}{n^{q/2}r^{1/2}}  } \le r~.
\end{equation}
We set $x=2^m\sigma\frac{|\B|^{q/2}}{n^{q/2}r^{1/2}} $, and 
\[
N_x = \left|\left\{(i_1,\ldots,i_m) \in\{1,\ldots,V\}^m : 1\le i_1 <\ldots<i_m\le |B|,\ U_{B_{i_1},\ldots,B_{i_m}}(h)-P^mh >x\right\}\right|~.
\]
The random variable $\frac{1}{\binom{|\B|}{m}} N_x$ is a $U$-statistics of order $m$ with the symmetric kernel $g:(i_1,\ldots,i_m)\mapsto\IND{U_{B_{i_1},\ldots,B_{i_m}}(h)-P^mh >x}$. Thus, Hoeffding's inequality for centered $U$-statistics 
(\ref{eq:basicconcen})
 gives 
\begin{equation}
\label{eq:stobound}
\prob{N_x - \EXP N_x \ge t\binom{|\B|}{m} } \le \exp \left(-\frac{|\B|t^2}{2m}\right)~.
\end{equation}
By (\ref{eq:chebyshev}) we have $ \EXP N_x\le \binom{|\B|}{m}r $. Taking $t=r=\frac{1}{4}$ in \eqref{eq:stobound}, by the definition of the median, we have
\begin{eqnarray*}
\prob{\overline{U}_{\mathcal{B}}(h)-P^m(h) >x } &\le & \prob{N_x \ge \frac{\binom{|\B|}{m} }{2}}\\
&\le & \exp \left(-\frac{|\B|}{32m}\right)~.
\end{eqnarray*}
Since $|\mathcal{B}| \ge 32m\log (\delta^{-1})$, with probability at least $1-\delta$, we have 
\[
\overline{U}_{\mathcal{B}}(h)-P^mh \le K_m\sigma\left(\frac{\ceil{\log \delta^{-1}}}{n}\right)^{q/2}
\]
with $K_m =2^{\frac{7}{2}m+1}m^{\frac{m}{2}}$. The upper bound for the lower tail holds by the same argument. 
\end{proof}

%
%
\subsection{Bounded moment of order $p$ with $1<p\le 2$}
\label{sec:momentp}
In this section, we weaken the assumption of finite variance and only assume the existence of a centered moment of order  $p$ for some $1<p\le 2$. 
The outline of the argument is similar as in the case of finite variance.
First we obtain a ``weak'' concentration inequality for the $U$-statistics
is each block and then use the property of the median to boost the weak inequality. While for the case of finite variance weak concentration could be 
proved by a direct calculation of the variance, here we need the 
randomization inequalities for convex functions of $U$-statistics 
established by \cite{del92} and \cite{ArGi93}. 
Note that, here, a $P$-canonical technical assumption is needed.

\begin{theorem}
\label{thm:momentp}
Let $h$ be a symmetric kernel of order $m$ such that $h-m_h$ is $P$-canonical.
Assume that $M_p:= \esp{\big|h(X_1,\ldots,X_m)-m_h\big|^p}^{1/p} < \infty $ for
some  $1<p\le 2$. Let $\delta \in (0,\frac{1}{2}) $ be such that $ \ceil{\log (\delta^{-1})} \le \frac{n}{64m}$. Let $\mathcal{B}$ be a regular partition of $\{1,\ldots,n\}$ with $|\mathcal{B}|=32m\ceil{\log (\delta^{-1})}$. Then, with probability at least $1-2\delta$, we have
\begin{equation}
\label{eq:thmpmoment}
|\overline{U}_{\mathcal{B}}(h) -m_h| \le K_m M_p\left(\frac{\ceil{\log (\delta^{-1})}}{n}\right)^{m(p-1)/p}
\end{equation}
where $K_m = 2^{4m+1}m^{\frac{m}{2}}$.
\end{theorem}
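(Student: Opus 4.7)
The plan mirrors the structure of the proof of Theorem \ref{thm:maindege}: first, establish a weak concentration inequality for each decoupled statistic $U_{B_{i_1},\ldots,B_{i_m}}(h)$; second, boost it to an exponential bound by counting the $m$-tuples of blocks on which the threshold is exceeded and applying Hoeffding's inequality \eqref{eq:basicconcen} to the resulting indicator-valued $U$-statistic of order $m$ on $|\B|$ points. The boosting half (the passage through the count $N_x$, the choice $r=t=1/4$ and the condition $|\B|=32m\ceil{\log(1/\delta)}$) carries over verbatim, as does the extension from the one-sided to the two-sided bound with confidence $1-2\delta$. The novelty lies entirely in the weak step, since the Chebyshev argument and the orthogonality of the Hoeffding decomposition used in Theorem \ref{thm:maindege} both require a finite second moment.

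To obtain the weak step under the sole assumption $M_p<\infty$, I would bound $\EXP|U_{B_{i_1},\ldots,B_{i_m}}(h)-m_h|^p$ and apply Markov's inequality with $r=1/4$. The key observation is that the blocks $B_{i_1},\ldots,B_{i_m}$ are disjoint, so $\{X_k:k\in B_{i_j}\}$ for $j=1,\ldots,m$ form $m$ independent i.i.d.\ samples and
\[
S:=\sum_{k_1\in B_{i_1}}\cdots\sum_{k_m\in B_{i_m}}\bigl(h(X_{k_1},\ldots,X_{k_m})-m_h\bigr)
\]
is a fully decoupled $m$-linear $U$-statistic whose kernel $h-m_h$ is $P$-canonical. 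Peeling off the outermost sum: conditionally on all variables indexed outside $B_{i_m}$, the summands $S'_{k_m}:=\sum_{\vec k'}(h(X_{\vec k'},X_{k_m})-m_h)$ for $k_m\in B_{i_m}$ are independent, and centred by canonicity. The von Bahr--Esseen inequality $\EXP|\sum_i Y_i|^p\le 2\sum_i\EXP|Y_i|^p$, valid for $1\le p\le 2$ and independent centred $Y_i$, then gives $\EXP|S|^p\le 2|B_{i_m}|\,\EXP|S'_1|^p$ after taking expectation. Since $h(\cdot,\ldots,\cdot,x_m)-m_h$ is itself $P$-canonical of order $m-1$ for every fixed $x_m$, induction on $m$ yields
\[
\EXP|S|^p\le 2^m|B_{i_1}|\cdots|B_{i_m}|\cdot M_p^p,
\]
equivalently $\EXP|U_{B_{i_1},\ldots,B_{i_m}}(h)-m_h|^p\le 2^{mp}(|\B|/n)^{m(p-1)}M_p^p$ using the regularity of the partition. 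Markov's inequality then supplies the analogue of \eqref{eq:chebyshev} that the boosting step needs.

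The main obstacle is this $p$-th moment bound for the decoupled canonical $U$-statistic. The conditioning/induction argument above is self-contained, but one can equivalently appeal to the randomization inequalities for convex functions of $U$-statistics of \cite{del92} and \cite{ArGi93}, which the authors identify as the substitute for the variance computation of Theorem \ref{thm:maindege}. Tracking constants through Markov and the boosting step, the prefactor $2^m\cdot 4^{1/p}\cdot(32m)^{m(p-1)/p}$ absorbs into the stated $K_m=2^{4m+1}m^{m/2}$ using $(p-1)/p\le 1/2$ for $1<p\le 2$.
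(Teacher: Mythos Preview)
Your proposal is correct. The boosting half is indeed identical to Theorem~\ref{thm:maindege}, and your constant tracking is accurate.

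The weak step is where you differ from the paper. The paper's argument runs through Rademacher randomization (Theorem~\ref{thm:randomize}): it bounds $\EXP|S|^p$ by $2^{mp}\,\EXP_X\EXP_\varepsilon\bigl|\sum\varepsilon_{k_1}\cdots\varepsilon_{k_m}g(X_{\mathbf k})\bigr|^p$, then uses concavity of $t\mapsto t^{p/2}$ to pull the $\varepsilon$-expectation inside, the orthogonality of Rademacher products to reduce the conditional second moment to $\sum_{\mathbf k} g(X_{\mathbf k})^2$, and finally subadditivity of $t\mapsto t^{p/2}$ to arrive at $2^{mp}\prod|B_{i_j}|\,M_p^p$. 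Your route---iterated von~Bahr--Esseen, peeling one block at a time and using canonicity to keep each conditional sum centred---is more elementary: it avoids symmetrization entirely and yields the slightly sharper constant $2^m$ in place of $2^{mp}$ before the regularity bound is applied. The paper's method has the advantage of being a one-shot application of a standard $U$-statistic tool, while yours makes the role of full degeneracy more transparent (it is precisely what allows the induction to continue). Both land on the same Markov threshold and the same final $K_m$.
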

\begin{proof}
Define the centered version of $h$ by $g(x_1,\ldots,x_m):=h(x_1,\ldots,x_m)-m_h$. Let $\varepsilon_1,\ldots,\varepsilon_n$ be i.i.d. Rademacher random variables (i.e., $ \prob{\varepsilon_1=-1}=\prob{\varepsilon_1=1}=1/2 $) independent of $X_1,\ldots,X_n$. By the randomization inequalities (see Theorem 3.5.3 in \cite{PeGi99} and also Theorem \ref{thm:randomize} in the Appendix), we have
\begin{eqnarray}
\lefteqn{
\esp{\left|\sum_{(k_1,\ldots,k_m) \in I_{B_{i_1},\ldots,B_{i_m}}} g(X_{k_1},\ldots,X_{k_m})\right|^p} }\nonumber \\ 
&\le & 2^{mp} \EXP_X\espi{\varepsilon}{\left|\sum_{(k_1,\ldots,k_m) \in I_{B_{i_1},\ldots,B_{i_m}}} \varepsilon_{k_1}\ldots\varepsilon_{k_m} g(X_{k_1},\ldots,X_{k_m})\right|^p}\label{ineq:randomisation}\\ 
& \le & 2^{mp} \espi{X}{\left|\espi{\varepsilon}{\left(\sum_{(k_1,\ldots,k_m) \in I_{B_{i_1},\ldots,B_{i_m}}} \varepsilon_{k_1}\ldots\varepsilon_{k_m} g(X_{k_1},\ldots,X_{k_m})\right)^2}\right|^{p/2}}\nonumber\\ 
& = & 2^{mp} \espi{X}{ \left|\sum_{(k_1,\ldots,k_m) \in I_{B_{i_1},\ldots,B_{i_m}}}  g(X_{k_1},\ldots,X_{k_m})^2\right|^{p/2}}\nonumber\\
& \le & 2^{mp}  \sum_{(k_1,\ldots,k_m) \in I_{B_{i_1},\ldots,B_{i_m}}} \EXP |g(X_{k_1},\ldots,X_{k_m})|^p\nonumber\\
&=& 2^{mp} |B_{i_1}|\cdots|B_{i_m}|\EXP |g|^p~. \label{ineq:convex2}
\end{eqnarray}
Thus, we have $\esp{ |U_{B_{i_1},\ldots,B_{i_m}}(h)-m_h|^p} \le 2^{mp} (|B_{i_1}|\ldots|B_{i_m}|)^{1-p}\EXP |g|^p$ and by Markov's inequality,
\begin{equation}
\label{eq:markov}
\prob{U_{B_{i_1},\ldots,B_{i_m}}(h)-m_h>\frac{2^m M_p}{r^{\frac{1}{p}}}\left(\frac{n}{(2|\B|)}\right)^{m\frac{1-p}{p}}} \le r~.
\end{equation}
Another use of \eqref{eq:stobound} with $t=r=\frac{1}{4}$ gives
\[
\overline{U}_{\mathcal{B}}(h)-P^mh \le 2^{4m+1}m^{\frac{m}{2}}M_p\left(\frac{\ceil{\log \delta^{-1}}}{n}\right)^{m\frac{p-1}{p}}~.
\]
\end{proof}

To see why the bound of Theorem \ref{thm:momentp} gives essentially 
the right order of magnitude, consider again the example described in
the introduction, when $m=2$, $ h(X_1,X_2)=X_1X_2$, and the $X_i$ have
an $\alpha$-\emph{stable} law $S(\gamma,\alpha)$ for some
$\gamma >0$ and $1< \alpha \le 2$. Note that an $\alpha$-stable random variable
has finite moments up to (but not including) $\alpha$ and therefore
we may take any $p=\alpha-\epsilon$ for any $\epsilon \in (0,1-\alpha)$.
As we noted it in the introduction, there exists a constant $c$ depending
on $\alpha$ and $\gamma$ only such that for all $1\le i_1 < i_2 \le V$,
\[
\prob{\left|U_{B_{i_1},B_{i_2}}(h)-m_h\right| \ge c \left(\frac{n}{|\B|}\right)^{2/\alpha-2}} \ge 2/3~,
\]
and therefore (\ref{eq:markov}) is essentially the best rate one can
hope for. 

\section{Cluster analysis with $U$-statistics}
\label{sec:clust}
In this section we illustrate the use of the proposed mean estimator
in a clustering problem when the presence of possibly heavy-tailed 
data requires robust techniques.

We consider the general statistical framework defined by \cite{Cle14}, 
described as follows:
Let  $X,X'$ be i.i.d.\ random variables taking values in $\X$ where typically but not necessarily, $\X$ is a subset of $\R^d$). 
For a partition $\Pa$ of $\X$ into $K$ disjoint sets--the so-called ``cells''--,
define $\Phi_{\Pa}(x,x')=\sum_{\mathcal{C} \in \Pa} \IND{(x,x') \in \mathcal{C}^2}$ the $\{0,1\}$-valued function that indicates whether two elements $x$ and $x'$ belong to the same cell $\mathcal{C}$. Given a \emph{dissimilarity measure}
$D : \X^2 \rightarrow \R_+^*$, the clustering task consists in finding 
a partition of $\X$ minimizing the \emph{clustering risk}
\[
W(\Pa) = \EXP \left[D(X,X') \Phi_{\Pa} (X,X') \right]~.
\]
Let $\Pi_K$ be a  finite class of partitions $\Pa$ of $\X$ into $K$ cells and
define $W^*= \min_{\Pa \in\Pi_K} W(\Pa)$.  

Given $X_1,\ldots,X_n$ be i.i.d.\ random variables distributed as $X$,
the goal is to find a partition $\Pa \in\Pi_K$ with risk as close to 
$W^*$ as possible. A natural idea--and this is the approach of \cite{Cle14}--is
to estimate $W(\Pa)$ by the $U$-statistics
\[
\widehat{W}_n(\Pa)=\frac{2}{n(n-1)} \sum_{1 \le i < j \le n} D(X_i,X_j) \Phi_{\Pa} (X_i,X_j)
\]
and choose a partition minimizing the empirical clustering risk $\widehat{W}_n(\Pa)$. \cite{Cle14} uses the theory of $U$-processes to analyze the
performance of such minimizers of $U$-statistics. However, in order to 
control uniform deviations of the form 
$\sup_{\Pa\in \Pi_K}|\widehat{W}_n(\Pa)-W(\Pa)|$, exponential concentration
inequalities are needed for $U$-statistics. This  restricts one to consider
bounded dissimilarity measures $D(X,X')$.  When $D(X,X')$ may have
a heavy tail, we propose to replace $U$-statistics by the median-of-means
estimators of $W(\Pa)$ introduced in this paper.

Let $\B$ be a regular partition of $ \{1,\ldots,n\} $ and define the 
median-of-means estimator $\overline{W}_{\B}(\Pa)$ of $W(\Pa)$ as in (\ref{eq:defrobust}). Then Theorem \ref{thm:maindege} applies and we have the following
simple corollary.
\begin{cor}
\label{cor:cluster1}
Let $\Pi_K$ be a class of partitions  of cardinality $|\Pi_K|=N$. 
Assume that $\sigma^2:=\esp{D(X_1,X_2)^2}< \infty$. Let $\delta \in (0,1/2)$ be such that $n \ge 128\ceil{\log (N/\delta)}$. Let $\B$ be a regular partition of $ \{1,\ldots,n\} $ with $|\B|=64\ceil{\log (N/\delta)}$.  Then there exists a constant $C$ such that, with probability at least $1-2\delta$, 
\begin{equation}
\label{eq:Clust1}
\sup_{\Pa \in \Pi_K}|\overline{W}_{\mathcal{B}}(\Pa) -W(\Pa)| \le C\sigma \left(\frac{\ceil{\log (N/\delta)}}{n}\right)^{1/2}~.
\end{equation}
\end{cor}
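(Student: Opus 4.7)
The plan is to reduce the uniform statement to a pointwise application of Theorem~\ref{thm:maindege} followed by a union bound over the $N$ partitions in $\Pi_K$. Fix an arbitrary $\Pa\in\Pi_K$ and consider the symmetric kernel $h_\Pa(x,x'):=D(x,x')\Phi_\Pa(x,x')$ of order $m=2$. Because $\Phi_\Pa$ is $\{0,1\}$-valued, $h_\Pa^2\le D^2$, so $\var{h_\Pa(X_1,X_2)}\le \esp{D(X_1,X_2)^2}=\sigma^2$, and crucially this bound does not depend on $\Pa$. There is no reason to expect $h_\Pa - W(\Pa)$ to be $P$-canonical, so I will invoke Theorem~\ref{thm:maindege} only with degeneracy parameter $q=1$ (the non-degenerate case), which delivers the rate $(\log/n)^{1/2}$ that appears in \eqref{eq:Clust1}.

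The same block partition $\B$ serves all partitions in $\Pi_K$, so I fix a regular $\B$ of $\{1,\dots,n\}$ with $|\B|=64\ceil{\log(N/\delta)}$. Since $m=2$, this equals $32m\ceil{\log((N/\delta)^{-1})^{-1}}=32m\ceil{\log(N/\delta)}$, which is the cardinality Theorem~\ref{thm:maindege} requires when the per-partition confidence level is set to $\delta/N$. The hypothesis $n\ge 128\ceil{\log(N/\delta)}$ provides the side condition $\ceil{\log(N/\delta)}\le n/(64m)$, and $\delta\in(0,1/2)$ implies $\delta/N\in(0,1/2)$. Theorem~\ref{thm:maindege} then gives, for each fixed $\Pa$,
\begin{equation*}
\prob{|\overline{W}_{\B}(\Pa)-W(\Pa)|>K_2\,\sigma\left(\frac{\ceil{\log(N/\delta)}}{n}\right)^{1/2}}\le \frac{2\delta}{N},
\end{equation*}
where $K_2=2^{8}\cdot 2=512$.

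A union bound over the $N$ partitions $\Pa\in\Pi_K$ upgrades this pointwise bound to the uniform estimate \eqref{eq:Clust1} with $C=K_2=512$ and total failure probability at most $2\delta$. No real obstacle arises in the argument: the only substantive check is the uniform variance bound, which is immediate because $\Phi_\Pa\in\{0,1\}$ kills any $\Pa$-dependence of the variance beyond what is already controlled by $\esp{D^2}$. The argument is modular precisely because the block partition $\B$ can be chosen once, independently of $\Pa$, and the constant in Theorem~\ref{thm:maindege} depends only on $m=2$.
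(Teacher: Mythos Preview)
Your argument is correct and matches the paper's own proof essentially line for line: bound $\var{h_\Pa(X_1,X_2)}$ by $\esp{D(X_1,X_2)^2}$ using $\Phi_\Pa\in\{0,1\}$, apply Theorem~\ref{thm:maindege} with $m=2$, $q=1$, and confidence level $\delta/N$, then take a union bound over the $N$ partitions. Your explicit check that $|\B|=64\ceil{\log(N/\delta)}=32m\ceil{\log(N/\delta)}$ and $n\ge 128\ceil{\log(N/\delta)}$ match the hypotheses of Theorem~\ref{thm:maindege} at level $\delta/N$ is exactly the content the paper leaves implicit.
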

\begin{proof}
Since $\Phi_{\Pa} (x,x')$ is bounded by 1, $\var{D(X_1,X_2) \Phi_{\Pa} (X_1,X_2)} \le \esp{D(X_1,X_2)^2}$. For a fixed $\Pa \in \Pi_K$, Theorem \ref{thm:maindege} applies with $m=2$ and $q=1$. The inequality follows from the union bound.
\end{proof}

Once uniform deviations of $\overline{W}_{\mathcal{B}}(\Pa)$ from its expected
value are controlled, it is a routine exercise to derive performance bounds
for clustering based on minimizing $\overline{W}_{\mathcal{B}}(\Pa)$ over
$\Pa\in \Pi_K$.

Let $\widehat{\Pa} = \argmin_{\Pa \in\Pi_K} \overline{W}_{\B}(\Pa)$ denote the
empirical minimizer. (In case of multiple minimizers, one may select one arbitrarily.) Now for any $\Pa_0 \in \Pi_K$,
\begin{eqnarray*}
W(\widehat{\Pa})-W^* &=& W(\widehat{\Pa})-\overline{W}_{\B}(\widehat{\Pa})+\overline{W}_{\B}(\widehat{\Pa})-W^* \\ 
   &\le & W(\widehat{\Pa})-\overline{W}_{\B}(\widehat{\Pa})+\overline{W}_{\B}(\Pa_0)-W(\Pa_0)+W(\Pa_0)-W^*\\ 
   & \le & 2 \sup_{\Pa \in \Pi_K}|\overline{W}_{\mathcal{B}}(\Pa) -W(\Pa)| + W(\Pa_0)-W^*~.
\end{eqnarray*}
Taking the infimum over $\Pi_K$, 
\begin{equation}
\label{eq:split}
W(\widehat{\Pa})-W^* \le 2 \sup_{\Pa \in \Pi_K}|\overline{W}_{\mathcal{B}}(\Pa) -W(\Pa)|~.
\end{equation}
Finally, (\ref{eq:Clust1}) implies that
\[
W(\widehat{\Pa})-W^* \le 2C\sigma \left(\frac{1+\log (N/\delta)}{n}\right)^{1/2}~.
\]
This result is to be compared with Theorem 2 of \cite{Cle14}. 
Our result holds under the only  assumption that $D(X,X')$ has a finite second moment. (This may be weakened to assuming the existence
of a finite $p$-th moment for some $1<p\le 2$ by using Theorem \ref{thm:momentp}). 
On the other hand, our result holds only for a finite class of partitions
while \cite{Cle14} uses the theory of $U$-processes to obtain
more sophisticated bounds for uniform deviations over possibly infinite
classes of partitions. It remains a challenge to develop a theory
to control processes of median-of-means estimators--in the style of \cite{ArGi93}--and not having to resort
to the use of simple union bounds.
 
In the rest of this section we show that, under certain ``low-noise'' assumptions,
analogous to the ones introduced by \cite{Tsy99} in the context of classification, to obtain faster rates of convergence. 
In this part we need bounds for $P$-canonical kernels and use the full power of Corollary \ref{cor:m=2}.
Similar arguments for the
study of minimizing $U$-statistics appear in \cite{CleVaLu08}, \cite{Cle14}.

We assume the following conditions, also considered by \cite{Cle14}:
\begin{enumerate}
\item There exists $\Pa^*$ such that $ W(\Pa^*)=W^* $
\item There exist $\alpha \in [0,1]$ and $\kappa < \infty$ such that for all 
$\Pa \in \Pi_K$ and for all $x \in \X$,
\[
\prob{\Phi_{\Pa}(x,X) \neq \Phi_{\Pa^*}(x,X)} \le \kappa (W(\Pa)-W^*)^{\alpha}~.
\]
\end{enumerate}
Note  that $\alpha \le 2$ since by the Cauchy-Schwarz inequality,
\[
W(\Pa)-W^* \le \esp{D(X_1,X_2)^2}^{1/2} \prob{\Phi_{\Pa}(X_1,X_2) \neq \Phi_{\Pa^*}(X_1,X_2)}^{1/2}~.
\]

\begin{cor}
\label{cor:Cluster2}
Assume the conditions above and that $\sigma^2:=\esp{D(X_1,X_2)^2}< \infty$. Let $\delta \in (0,1/2)$ be such that $n \ge 128\ceil{\log (N/\delta)}$. Let $\B$ be a regular partition of $ \{1,\ldots,n\} $ with $|\B|=64\ceil{\log (N/\delta)}$. Then there exists a constant $C$ such that, with probability at least $1-2\delta$, 
\begin{equation}
\label{eq:Clust2}
W(\widehat{\Pa})-W^* \le C\sigma^{2/(2-\alpha)}\left( \frac{\ceil{\log (N/\delta)}}{n}\right)^{1/(2-\alpha)}~.
\end{equation}
\end{cor}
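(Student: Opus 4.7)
The plan is to execute the classical localization argument for fast excess-risk rates (as in \cite{CleVaLu08,Cle14} for the $U$-statistic case), with Corollary \ref{cor:m=2} substituted for the Arcones--Gin\'e inequality at the step where the $P$-canonical part of a Hoeffding decomposition is controlled. First, I would apply the Hoeffding decomposition to the centered difference kernel
\[
\wt h_{\Pa}(x_1,x_2) := (h_{\Pa} - h_{\Pa^*})(x_1,x_2) - (W(\Pa) - W^*) = \wt g_{\Pa}(x_1) + \wt g_{\Pa}(x_2) + \wt k_{\Pa}(x_1,x_2),
\]
where $\wt g_{\Pa}$ is the centered linear projection and $\wt k_{\Pa}$ is a $P$-canonical kernel of order $2$. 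Condition~(2) and the Cauchy--Schwarz inequality applied to the conditional expectation given $X_1$ give $\Var(\wt g_{\Pa}(X)) \le 2\kappa\sigma^{2}(W(\Pa)-W^*)^{\alpha} + 2(W(\Pa)-W^*)^{2} \le C\sigma^{2}(W(\Pa)-W^*)^{\alpha}$ in the relevant regime, while $\Var(\wt k_{\Pa}(X_1,X_2)) \le \sigma^{2}$ follows from $\pi_{2}$ being an $L^{2}$-contraction.

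Next, I would establish the uniform ``fast'' deviation inequality: with probability at least $1-2\delta$, for every $\Pa\in\Pi_K$,
\[
\big|\overline W_{\B}(\Pa) - \overline W_{\B}(\Pa^*) - (W(\Pa) - W^*)\big| \le C\sigma (W(\Pa) - W^*)^{\alpha/2}\sqrt{\frac{\log(N/\delta)}{n}} + C\sigma\frac{\log(N/\delta)}{n}.
\]
The canonical term is obtained by applying Corollary \ref{cor:m=2} to $\wt k_{\Pa}$ at confidence level $\delta/N$ and union-bounding over the $N$ partitions. The linear term is obtained by a block-wise Chebyshev bound on $\frac{1}{|B|}\sum_{k\in B}\wt g_{\Pa}(X_k)$ (using the variance estimate above), followed by the binomial-tail concentration already used inside the proof of Theorem \ref{thm:maindege}, once again union-bounded over $\Pi_K$. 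A ``good block'' argument combines the two: at least a $3/4$-fraction of block pairs satisfy the linear-rate bound and at least $3/4$ satisfy the canonical-rate bound, so at least half deliver $|U_{B_{i_1},B_{i_2}}(\wt h_{\Pa})|$ below the combined rate, which then controls the difference of medians $\overline W_{\B}(\Pa) - \overline W_{\B}(\Pa^*)$.

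Finally, applying this inequality at $\Pa = \widehat\Pa$ together with the optimality $\overline W_{\B}(\widehat\Pa) \le \overline W_{\B}(\Pa^*)$, which lower-bounds the left-hand side by $W(\widehat\Pa) - W^*$, yields
\[
W(\widehat\Pa) - W^{*} \le C\sigma(W(\widehat\Pa) - W^{*})^{\alpha/2}\sqrt{\frac{\log(N/\delta)}{n}} + C\sigma\frac{\log(N/\delta)}{n}.
\]
Since $\alpha/2 < 1$, this fixed-point inequality solves to $W(\widehat\Pa) - W^{*} \le C'\sigma^{2/(2-\alpha)}\bigl(\log(N/\delta)/n\bigr)^{1/(2-\alpha)}$, which is the claimed rate.

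The main obstacle is the uniform deviation inequality. Because the median is not linear, $\overline W_{\B}(\Pa) - \overline W_{\B}(\Pa^*)$ is a \emph{difference of medians}, not the median of the pair-wise difference $U_{B_{i_1},B_{i_2}}(\wt h_{\Pa})$, so transferring the ``$3/4$ good linear $+$ $3/4$ good canonical $\Rightarrow$ $1/2$ simultaneously good'' conclusion from the median of the differences to the difference of the two medians requires an additional argument: one has to show that on the simultaneously-good pairs the two $U$-statistics share a common drift of $W(\Pa) - W^*$ to within the combined rate, forcing the two medians to shift together. Getting the constants right so that the union bound over $\Pi_K$ still fits with $|\B| = 64\lceil\log(N/\delta)\rceil$ blocks is where the analysis departs most substantively from the standard $U$-statistic argument in \cite{Cle14}.
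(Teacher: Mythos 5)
Your skeleton (Hoeffding decomposition of the difference kernel $h_\Pa = D\cdot(\Phi_\Pa - \Phi_{\Pa^*})$, a localized variance bound via condition~(2), median boosting, union bound over $\Pi_K$, fixed-point solve) is the same as the paper's, but you diverge in the key middle step. You propose two \emph{separate} median-of-means controls --- Corollary~\ref{cor:m=2} on the canonical remainder $\wt k_\Pa$ and a block-Chebyshev bound on the linear projection $\wt g_\Pa$ --- glued by a ``$3/4$ good $+$ $3/4$ good $\Rightarrow 1/2$ simultaneously good'' intersection. The paper does none of this: it treats the full block $U$-statistic $\Lambda_B(\Pa)$ with kernel $h_\Pa$ as a single object, uses the orthogonality of the Hoeffding decomposition only to \emph{compute} $\var{\Lambda_B(\Pa)}$ (giving $\tfrac{4}{|B|}\var{h^{(1)}} + \tfrac{2}{|B|(|B|-1)}\var{h^{(2)}}$, with $\var{h^{(1)}}\le \sigma^2\kappa(W(\Pa)-W^*)^\alpha$ by Cauchy--Schwarz plus condition~(2), and the canonical part harmlessly of order $\sigma^2/|B|^2$), then applies Chebyshev once and boosts by a single application of \eqref{eq:stobound}. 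No separate invocation of Corollary~\ref{cor:m=2} and no good-event intersection is needed; the paper's route is strictly leaner than yours.

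More seriously, the obstacle you flag at the end is a genuine gap that your sketch does not close. All the concentration machinery controls $\mathrm{Med}\{U_B(\wt h_\Pa)\}_B - (W(\Pa)-W^*)$, i.e.\ the median of block-wise \emph{differences}, whereas the quantity that arises from $\overline W_\B(\widehat\Pa)\le\overline W_\B(\Pa^*)$ is $\overline W_\B(\widehat\Pa)-\overline W_\B(\Pa^*)$, a \emph{difference of medians}. Since the median is nonlinear, the two do not coincide, and your one-sentence plan (``the two medians are forced to shift together on simultaneously-good pairs'') is not an argument. This missing bridge is exactly where a complete proof must do work; without it the fixed-point inequality you write down is not established. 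For what it is worth, the paper's own write-up glosses over precisely the same point when it passes from the bound on $|\overline{\Lambda}_\B(\Pa)-\Lambda(\Pa)|$ to a bound on $|\overline W_\B(\widehat\Pa)-W(\widehat\Pa)|$ and then invokes~\eqref{eq:split}; so your diagnosis is a correct and useful observation, but your proposal does not resolve it either.
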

 The proof Corollary \ref{cor:Cluster2} is postponed to the Appendix.

\section{Appendix}

\subsection{Decoupling and randomization}  

Here we summarize some of the key tools for analyzing $U$-statistics that
we use in the paper. For an excellent exposition we refer to \cite{PeGi99}.

Let $\{X_i\}$ be i.i.d.\ random variables taking values in $\X$ and
let $\{X_i^k\},\ k=1,\ldots, m$, be sequences of independent copies.
Let $\Phi$ be a non-negative function. 
As a corollary of Theorem 3.1.1 in \cite{PeGi99} we have the following:
\begin{theorem}
\label{thm:decoupling}
Let $h : \X^m \rightarrow \R$ be a measurable function with $\EXP |h(X_1,\ldots,X_m)| <\infty$. Let $\Phi: [0, \infty ) \rightarrow [0, \infty)$ be a convex nondecreasing function such that $\EXP \Phi \left( |h(X_1,\ldots,X_m)|\right) < \infty$. Then
\[
\EXP \Phi \left( \left|\sum_{ I_n^m}h(X_{i_1},\ldots,X_{i_m})\right|\right) \le \EXP \Phi \left( C_m \left|\sum_{ I_n^m} h(X_{i_1}^1,\ldots,X_{i_m}^m)\right|\right)
\]
where $C_m=2^m(m^m-1)((m-1)^{m-1}-1)\times\cdots\times 3$.
Moreover, if the kernel $h$ is symmetric, then,
\[
\EXP \Phi \left( c_m \left|\sum_{ I_n^m} h(X_{i_1}^1,\ldots,X_{i_m}^m)\right|\right) \le \EXP \Phi \left( \left|\sum_{ I_n^m}h(X_{i_1},\ldots,X_{i_m})\right|\right)
\]
where $c_m=1/(2^{2m-2}(m-1)!)$.
\end{theorem}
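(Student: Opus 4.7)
The plan is to derive both inequalities from the randomization-based decoupling construction that underlies Theorem 3.1.1 of \cite{PeGi99}. The convex nondecreasing function $\Phi$ is exactly the class of functionals for which that construction is designed to work, so the bulk of the effort is combinatorial bookkeeping to extract the specific constants $C_m$ and $c_m$.

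For the first inequality (coupled bounded by decoupled), the key tool is a multinomial selection argument. I would introduce i.i.d.\ selectors $\eta_1,\ldots,\eta_n$ uniform on $\{1,\ldots,m\}$, independent of the $X_i$'s. For each tuple $(i_1,\ldots,i_m) \in I_n^m$, the event $\{\eta_{i_j}=j \text{ for all } j\}$ occurs with probability $m^{-m}$, and hence
\[
\sum_{I_n^m} h(X_{i_1},\ldots,X_{i_m}) = m^m \, \EXP_\eta \left[\sum_{I_n^m} h(X_{i_1},\ldots,X_{i_m}) \prod_{j=1}^m \mathbbm{1}_{\{\eta_{i_j}=j\}}\right].
\]
Conditionally on $\{\eta_i\}$, the retained tuples have coordinates in disjoint subsets of $\{1,\ldots,n\}$, so by independence the restricted sum has the same joint distribution as the corresponding sum built from the decoupled sequences $\{X_i^k\}$. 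Applying Jensen's inequality to $\Phi(|\cdot|)$ to push $\EXP_\eta$ outside yields a crude bound with constant $m^m$; refining to $C_m = 2^m(m^m-1)((m-1)^{m-1}-1)\cdots 3$ requires decoupling one coordinate at a time, which is precisely the inductive step in \cite{PeGi99}.

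For the second inequality (decoupled bounded by coupled in the symmetric case), the strategy is to reconstruct a coupled sum from a decoupled one by averaging over random permutations. Let $\sigma_1,\ldots,\sigma_m$ be independent uniform random permutations of $\{1,\ldots,n\}$, independent of everything else. Using symmetry of $h$ together with independence of the copies, one can show that after permutation-averaging the decoupled sum $\sum_{I_n^m} h(X_{i_1}^1,\ldots,X_{i_m}^m)$ can be identified, up to combinatorial factors coming from index collisions, with an expectation of a coupled $U$-statistics-type sum drawn from a single i.i.d.\ sequence. Controlling the diagonal contributions by inclusion--exclusion and applying Jensen once more in the reverse direction produces the constant $c_m = 1/(2^{2m-2}(m-1)!)$.

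The main obstacle is purely bookkeeping. The qualitative statement that coupled and decoupled sums dominate one another under $\Phi(|\cdot|)$ up to multiplicative constants depending only on $m$ is the standard decoupling principle, but tracking the precise values of $C_m$ and $c_m$ through the induction on $m$, and verifying that the symmetry hypothesis is genuinely needed only in the reverse direction, is the delicate part. Both directions rely on the convex-nondecreasing hypothesis on $\Phi$, which is what allows Jensen's inequality to commute with the $\EXP_\eta$ or $\EXP_\sigma$ averaging in the correct direction at each stage of the induction.
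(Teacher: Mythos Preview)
The paper does not actually prove this theorem: it is stated in the Appendix with the preface ``As a corollary of Theorem 3.1.1 in \cite{PeGi99} we have the following'' and no argument is given. So there is no in-paper proof to compare against; the paper simply quotes the result.

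Your sketch of the forward direction (random $\{1,\ldots,m\}$-valued selectors $\eta_i$, the identity $\sum_{I_n^m} h = m^m\,\EXP_\eta[\cdots]$, Jensen for $\Phi(|\cdot|)$, and then the one-coordinate-at-a-time induction to sharpen $m^m$ to $C_m$) is precisely the argument in \cite{PeGi99}, so that part is on target.

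For the reverse direction your proposal is less accurate. The standard route in \cite{PeGi99} is not permutation-averaging but \emph{embedding}: one realizes the decoupled copies as disjoint blocks of a single i.i.d.\ sequence $Z_1,\ldots,Z_{mn}$ by setting $X_i^k=Z_{(k-1)n+i}$, and then uses the symmetry of $h$ to recognize the decoupled sum as (a constant times) a conditional expectation of the coupled $U$-statistic $\sum_{I_{mn}^m} h(Z_{i_1},\ldots,Z_{i_m})$. Jensen applied to that conditional expectation gives the bound, and the combinatorics of how many of the $\binom{mn}{m}m!$ ordered $m$-tuples fall one-per-block produces the constant $c_m=1/(2^{2m-2}(m-1)!)$. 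Your ``random permutations plus inclusion--exclusion on index collisions'' description does not obviously implement this, and as stated it is not clear how the averaging would yield a coupled sum rather than another decoupled one; if you want to carry it out, it is cleaner to follow the embedding argument directly.
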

An equivalent result for tail probabilities of $U$-statistics is the following (see Theorem 3.4.1 in \cite{PeGi99}):
\begin{theorem}
\label{thm:decoutail}
Under the same hypotheses as Theorem \ref{thm:decoupling}, there exists a constant $C_m$ depending  on $m$ only such that, for all $t>0$,
\[
\prob{ \left|\sum_{ I_n^m}h(X_{i_1},\ldots,X_{i_m})\right|>t } \le C_m \prob{C_m\left|\sum_{ I_n^m} h(X_{i_1}^1,\ldots,X_{i_m}^m)\right|>t}~.
\]
If moreover, the kernel $h$ is symmetric then there exists a constant $c_m$ depending on $m$ only such that, for all $t>0$,
\[
c_m \prob{c_m\left|\sum_{ I_n^m} h(X_{i_1}^1,\ldots,X_{i_m}^m)\right|>t} \le \prob{ \left|\sum_{ I_n^m}h(X_{i_1},\ldots,X_{i_m})\right|>t }~.
\]
\end{theorem}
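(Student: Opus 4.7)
The plan is to derive the tail comparisons in Theorem \ref{thm:decoutail} from the $\Phi$-moment comparisons of Theorem \ref{thm:decoupling} by a careful choice of convex nondecreasing test function $\Phi$. For brevity, set $S_n = |\sum_{I_n^m} h(X_{i_1},\ldots,X_{i_m})|$ and $T_n = |\sum_{I_n^m} h(X_{i_1}^1,\ldots,X_{i_m}^m)|$, and note that Theorem \ref{thm:decoupling} applies under the stated integrability hypothesis on $h$.

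For the first inequality, fix $t > 0$ and apply Theorem \ref{thm:decoupling} with the convex nondecreasing function $\Phi(x) = (x - t/2)_+$. This yields $\EXP(S_n - t/2)_+ \le \EXP(C_m T_n - t/2)_+$. Combined with the elementary Markov-type bound $\Pr\{S_n > t\} \le (2/t)\,\EXP(S_n - t/2)_+$ (valid for any nonnegative random variable), one gets
\[
\Pr\{S_n > t\} \le \frac{2}{t}\,\EXP(C_m T_n - t/2)_+ = \frac{2}{t}\int_{t/2}^\infty \Pr\{C_m T_n > s\}\,ds.
\]

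The main obstacle is to convert this integrated tail bound into the clean multiplicative form $C_m\Pr\{C_m T_n > t\}$ stated in the theorem. Elementary manipulations alone do not close the argument, since the tail integral cannot in general be bounded by a constant multiple of $t\cdot \Pr\{C_m T_n > t\}$ without further structural information. One must exploit the combinatorial structure of $U$-statistics. The standard route, due to de la Pe\~na, is a direct construction: randomly partition $\{1,\ldots,n\}$ into $m$ blocks (each index assigned independently and uniformly to one of $m$ categories) and observe that, conditionally on this partition, the sum restricted to $m$-tuples with exactly one index in each block is (up to a scaling factor coming from the block-occupancy probabilities) a decoupled $U$-statistic in the fresh independent copies $X_i^k$. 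This gives a \emph{pathwise} representation of $S_n$ as a weighted conditional expectation of decoupled sums; applying Jensen's inequality inside the conditional expectation, together with Chebyshev on the block sizes, produces the tail bound with the stated constant $C_m$ depending only on $m$.

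The second inequality, assuming $h$ symmetric, follows by an analogous argument starting from the second half of Theorem \ref{thm:decoupling}, namely $\EXP \Phi(c_m T_n) \le \EXP \Phi(S_n)$ for all convex nondecreasing $\Phi$. Taking again $\Phi(x) = (x - t/2)_+$ and running the same Markov-plus-integral manipulation with the roles of $S_n$ and $T_n$ interchanged, one extracts a lower bound on $\Pr\{S_n > t\}$ in terms of $\Pr\{c_m T_n > t\}$. The symmetry of $h$ is essential at the final closing step, since it is what allows permutations of the $m$ independent copies $X^1,\ldots,X^m$ in the decoupled sum to be absorbed without changing the law, matching the direction of the comparison.
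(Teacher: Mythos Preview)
The paper does not supply its own proof of this theorem; it is quoted from \cite{PeGi99}, Theorem~3.4.1, as a known tool in the appendix. So there is no ``paper's approach'' to compare against directly.

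That said, your proposal has a genuine gap worth naming. You correctly observe that the obvious route---taking $\Phi(x)=(x-t/2)_+$ in Theorem~\ref{thm:decoupling} and applying Markov---only yields an integrated-tail bound and does not close to the pointwise comparison $\Pr\{S_n>t\}\le C_m\Pr\{C_mT_n>t\}$. This is not a mere technicality: increasing convex ordering (i.e., $\EXP\Phi(S_n)\le\EXP\Phi(C_mT_n)$ for all convex nondecreasing $\Phi$) does \emph{not} in general imply a tail comparison of this multiplicative form, so the tail decoupling theorem is genuinely not a corollary of the $\Phi$-moment version.

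You then pivot to a sketch of de~la~Pe\~na's random-partition construction. The trouble is that the mechanism you invoke---``Jensen's inequality inside the conditional expectation''---is precisely the argument that yields the \emph{moment} decoupling of Theorem~\ref{thm:decoupling}, not the tail version. The proof of the tail inequality in \cite{PeGi99} (going back to de~la~Pe\~na and Montgomery-Smith) uses the random partition together with a separate tail-comparison lemma for conditionally shifted sums; Jensen alone does not produce a pointwise tail bound. The same objection applies to your treatment of the reverse inequality under symmetry, where you again fall back on ``the same Markov-plus-integral manipulation'' that you already acknowledged does not close.

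In short: your diagnosis of why the $\Phi$-moment route stalls is correct, and you point toward the right construction, but the closing step you describe is the moment argument rather than the genuinely different tail argument that the result requires.
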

The next Theorem is a direct corollary of Theorem 3.5.3 in \cite{PeGi99}. 
\begin{theorem}
\label{thm:randomize}
Let $1<p\le 2$. Let $(\varepsilon_i)_{i \le n} $ be i.i.d Rademacher random variables independent of the $(X_i)_{i \le n}$. Let $h : \X \rightarrow \R$ be a $P$-degenerate measurable function such that $\EXP  \left( |h(X_1,\ldots,X_m)|^p\right) < \infty$. Then
\begin{eqnarray*}
c_m\EXP\ \Big|\sum_{ I_n^m}\varepsilon_{i_1}\ldots\varepsilon_{i_m} h(X_{i_1},\ldots,X_{i_m})\Big|^p & \le & \EXP\ \Big|\sum_{ I_n^m} h(X_{i_1},\ldots,X_{i_m})\Big|^p
\\
& \le &
C_m \EXP\ \Big|\sum_{ I_n^m}\varepsilon_{i_1}\ldots\varepsilon_{i_m} h(X_{i_1},\ldots,X_{i_m})\Big|^p~,
\end{eqnarray*}
where $C_m=2^{mp}$ and $c_m=2^{-mp}$.
\end{theorem}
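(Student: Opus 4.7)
The plan is to derive the two-sided inequality by an iterated one-coordinate symmetrization, applied $m$ times (once per argument of $h$); this is essentially the specialization of Theorem~3.5.3 of \cite{PeGi99} to the single convex function $x\mapsto |x|^p$ with $p\in(1,2]$, and the constants $2^{mp}$, $2^{-mp}$ emerge by tracking the loss at each symmetrization step. I will focus on the upper bound $\mathbb{E}|S_n|^p\le 2^{mp}\,\mathbb{E}|T_n|^p$, where $S_n=\sum_{I_n^m} h(X_{i_1},\ldots,X_{i_m})$ and $T_n=\sum_{I_n^m}\varepsilon_{i_1}\cdots\varepsilon_{i_m}h(X_{i_1},\ldots,X_{i_m})$, because this is the direction actually used in \eqref{ineq:randomisation}; the lower bound is symmetric.

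For a single symmetrization step, let $(X'_i)_{i\le n}$ be an independent copy of $(X_i)_{i\le n}$. Because $h-m_h$ is $P$-canonical, one has $\mathbb{E}[h(X'_{i_1},X_{i_2},\ldots,X_{i_m})\mid X_{i_2},\ldots,X_{i_m}]=m_h$ for each $(i_1,\ldots,i_m)\in I_n^m$; subtracting the $m_h$ cancellations (the kernel may as well be taken centered), one gets
\[
S_n \;=\; \mathbb{E}_{X'}\!\left[\sum_{I_n^m}\bigl(h(X_{i_1},\ldots,X_{i_m})-h(X'_{i_1},X_{i_2},\ldots,X_{i_m})\bigr)\,\Big|\,X\right].
\]
Conditional Jensen's inequality gives $\mathbb{E}|S_n|^p\le \mathbb{E}|\sum_{I_n^m}(h-h')|^p$, where $h'$ denotes the version with $X_{i_1}$ replaced by $X'_{i_1}$. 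Since each summand is antisymmetric under the swap $X_{i_1}\leftrightarrow X'_{i_1}$, the joint distribution of the family $\{h-h'\}$ is invariant under multiplication by any independent Rademacher vector $(\varepsilon_{i_1})_{i_1\le n}$. Therefore
\[
\mathbb{E}\Big|\sum_{I_n^m}(h-h')\Big|^p \;=\; \mathbb{E}\Big|\sum_{I_n^m}\varepsilon_{i_1}(h-h')\Big|^p \;\le\; 2^p\,\mathbb{E}\Big|\sum_{I_n^m}\varepsilon_{i_1}h(X_{i_1},\ldots,X_{i_m})\Big|^p,
\]
by the $L^p$-triangle inequality applied to the two (identically distributed) terms.

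The iteration is the crucial but routine step. Having introduced $\varepsilon_{i_1}$, condition on $(X_{i_1},\varepsilon_{i_1})_{i_1}$ and repeat the symmetrization in the second coordinate: the canonical property in the second argument lets one write the inner sum as a conditional expectation of a difference that is antisymmetric in $X_{i_2}\leftrightarrow X'_{i_2}$, and a factor $2^p$ is paid to insert $\varepsilon_{i_2}$. Iterating this exactly $m$ times—each step consuming one factor of $2^p$—yields $\mathbb{E}|S_n|^p\le 2^{mp}\mathbb{E}|T_n|^p$, which is the claimed upper bound with $C_m=2^{mp}$. For the lower bound $\mathbb{E}|T_n|^p\le 2^{mp}\mathbb{E}|S_n|^p$, one runs the same argument starting from $T_n$: the Rademacher signs allow the introduction of independent copies and the triangle inequality is applied in reverse at each coordinate, producing the same $2^{mp}$ loss and hence $c_m=2^{-mp}$.

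The main obstacle is bookkeeping during the iteration: at step $k$, the ``kernel'' in play is no longer $h$ but a signed/symmetrized version of it, and one must verify that (i) the canonical property in the $k$-th coordinate still holds and (ii) the $L^p$-triangle step produces exactly one factor $2^p$, not more. The first holds because the sign $\varepsilon_{i_j}$ is independent of $X_{i_k}$ for $j<k$, so conditional expectations with respect to $X_{i_k}$ still vanish; the second is a direct consequence of the identical distribution of the two halves of the symmetrized sum. Once these are checked, the stated constants $C_m=2^{mp}$ and $c_m=2^{-mp}$ follow immediately.
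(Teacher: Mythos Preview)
Your proof sketch is correct and is precisely the iterated one-coordinate symmetrization argument that underlies Theorem~3.5.3 of \cite{PeGi99}. Note, however, that the paper itself does not give a proof of this statement: it is presented in the Appendix as ``a direct corollary of Theorem~3.5.3 in \cite{PeGi99}'' and is simply quoted as a tool. So there is no paper proof to compare against; you have supplied the argument that the cited reference contains, and your tracking of the constants ($2^p$ per coordinate, via conditional Jensen plus the $L^p$-triangle inequality on two identically distributed halves) is accurate, yielding exactly $C_m=2^{mp}$ and $c_m=2^{-mp}$.

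One small remark on your description of the lower bound: ``the triangle inequality is applied in reverse'' is slightly misleading. The desymmetrization step is not a reversed triangle inequality; rather, one uses the canonical property to insert $-\mathbb{E}_{X'}h'$ at no cost, then Jensen to pass to $\sum\varepsilon_{i_1}(h-h')$, then the sign-swap symmetry to remove $\varepsilon_{i_1}$, and finally the ordinary triangle inequality (forward, not reverse) on $\sum h-\sum h'$ to get $2^p\,\mathbb{E}|\sum h|^p$. The constant is the same, so your conclusion stands; only the wording invites confusion.
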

The same conclusion holds for decoupled $U$-statistics.

\subsection{$\alpha$-stable distributions}

\begin{prop}
\label{prop:stable}
Let $\alpha \in (0,2) $. Let $ X_1,\ldots,X_n $ be i.i.d. random variables of law $S(\gamma,\alpha)$. Let $f_{\gamma,\alpha} :x \mapsto \R $ be the density function of $X_1$. Let $S_n =\sum_{1 \le i\le n} X_i$. Then 
\begin{itemize}
\item[(i)] $f_{\gamma,\alpha}(x)$ is an even function.
\item[(ii)] $ f_{\gamma,\alpha}(x) \underset{x \to +\infty}{\sim}\alpha\gamma^{\alpha}c_{\alpha}x^{-\alpha-1} $ with $c_\alpha = \sin (\frac{\pi \alpha}{2})\Gamma(\alpha)/\pi$.
\item[(iii)] $\esp{X_1^p}$ is finite for any $p < \alpha$ and is infinite whenever $ p \ge \alpha$. 
\item[(iv)] $S_n$ has a $\alpha$-stable law $S(\gamma n^{1/\alpha},\alpha)$.
\end{itemize}
\end{prop}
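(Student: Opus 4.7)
The whole proposition is a consequence of the characteristic function formula $\varphi(u) = \EXP e^{iuX_1} = \exp(-\gamma^\alpha|u|^\alpha)$, and I would organize the proof by taking the four statements in the easy-first order (i), (iv), (ii), (iii).

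For (i), I would start from the Fourier inversion formula $f_{\gamma,\alpha}(x) = \frac{1}{2\pi}\int_{\R} e^{-iux}\varphi(u)\,du$, which is legitimate because $\varphi$ is integrable (it is a stretched exponential for $\alpha>0$). Since $\varphi(u)=\varphi(-u)$ is real and even in $u$, the change of variable $u\mapsto -u$ yields $f_{\gamma,\alpha}(-x)=f_{\gamma,\alpha}(x)$, and moreover the inversion formula simplifies to $f_{\gamma,\alpha}(x) = \frac{1}{\pi}\int_0^\infty \cos(ux)\exp(-\gamma^\alpha u^\alpha)\,du$, a form I will reuse in (ii). For (iv), I would compute $\EXP e^{iuS_n}=\prod_{i=1}^n\varphi(u)=\exp(-n\gamma^\alpha|u|^\alpha)=\exp\bigl(-(\gamma n^{1/\alpha})^\alpha|u|^\alpha\bigr)$, which by uniqueness of characteristic functions identifies the law of $S_n$ as $S(\gamma n^{1/\alpha},\alpha)$.

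The main work is in (ii). The plan is to extract the leading-order tail of $f_{\gamma,\alpha}$ from the oscillatory integral in (i). The cleanest route is to formally expand $\exp(-\gamma^\alpha u^\alpha)=\sum_{k\ge 0}\frac{(-\gamma^\alpha u^\alpha)^k}{k!}$ and interpret the resulting Mellin-type integrals $\int_0^\infty u^{k\alpha}\cos(ux)\,du$ via the classical identity $\int_0^\infty u^{s-1}\cos(ux)\,du = \Gamma(s)\cos(\pi s/2)\,x^{-s}$, valid for $0<s<1$ and extendable by analytic continuation. Doing so (with the $k=0$ term being purely distributional and contributing nothing to the absolutely integrable density) gives the asymptotic series
\[
f_{\gamma,\alpha}(x) \sim \frac{1}{\pi}\sum_{k\ge 1}\frac{(-1)^{k+1}}{k!}\,\Gamma(k\alpha+1)\sin\!\bigl(\tfrac{\pi k\alpha}{2}\bigr)\,\gamma^{k\alpha}\,x^{-k\alpha-1}\qquad (x\to+\infty),
\]
whose leading $k=1$ term is exactly $\alpha\gamma^\alpha c_\alpha x^{-\alpha-1}$ after using $\Gamma(\alpha+1)=\alpha\Gamma(\alpha)$. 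To make this rigorous without going through a general asymptotic-expansion theorem, I would instead split the integral at $u=1/x$: on $[0,1/x]$ the integrand is bounded and contributes $O(1/x)$ times something that decays faster than $x^{-\alpha-1}$, while on $[1/x,\infty)$ I substitute $v=ux$ and replace $\exp(-\gamma^\alpha(v/x)^\alpha)$ by its first-order Taylor expansion $1-\gamma^\alpha(v/x)^\alpha$, then use $\int_0^\infty v^{-\alpha-1}\cos v\,dv = -\Gamma(-\alpha)\cos(\pi\alpha/2)$ together with the reflection formula $\Gamma(\alpha)\Gamma(1-\alpha)=\pi/\sin(\pi\alpha)$ to arrive at the claimed constant $c_\alpha=\sin(\pi\alpha/2)\Gamma(\alpha)/\pi$. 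The main obstacle is controlling the error terms uniformly so that the remainder is $o(x^{-\alpha-1})$; this is where the argument is most delicate, and where one typically appeals to standard Abel/Tauberian estimates.

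Finally, (iii) follows directly from (i) and (ii): evenness reduces $\EXP|X_1|^p = 2\int_0^\infty x^p f_{\gamma,\alpha}(x)\,dx$, the density is continuous hence bounded on any compact set, and by (ii) the integrand behaves like a constant times $x^{p-\alpha-1}$ at infinity, so the integral converges iff $p-\alpha-1<-1$, i.e.\ $p<\alpha$, and diverges for $p\ge\alpha$. This closes the proof.
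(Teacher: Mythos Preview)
Your arguments for (i), (iv), and (iii) are correct and match the paper's own proof, which simply says that (i) and (iv) ``follow directly from the definition'' and that (iii) ``is a consequence of (ii)''; you have spelled out exactly those deductions.

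For (ii) the paper does not give a proof at all: it cites the introduction of Zolotarev's monograph. Your formal series expansion and the identification of the leading $k=1$ term with $\alpha\gamma^\alpha c_\alpha x^{-\alpha-1}$ are the classical approach and are correct. However, the rigorous splitting argument you outline has a gap. On $[0,1/x]$ the integrand is indeed bounded by $1$, so that piece is $O(1/x)$; but $1/x$ is \emph{larger}, not smaller, than $x^{-\alpha-1}$ for $\alpha\in(0,2)$, so this bound alone does not show the small-$u$ contribution is $o(x^{-\alpha-1})$. The underlying issue is that the $k=0$ term in your formal series, $\frac{1}{\pi}\int_0^\infty\cos(ux)\,du$, is a non-convergent oscillatory integral whose ``value'' is zero only in a distributional sense; a naive truncation at $u=1/x$ leaves a residual of size $1/x$ that must cancel against a matching piece from $[1/x,\infty)$. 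The standard cure is to integrate by parts once before expanding, obtaining
\[
f_{\gamma,\alpha}(x)=\frac{\alpha\gamma^\alpha}{\pi x^{\alpha+1}}\int_0^\infty v^{\alpha-1}\sin(v)\,e^{-\gamma^\alpha(v/x)^\alpha}\,dv,
\]
after which the limit $x\to\infty$ becomes the conditionally convergent integral $\int_0^\infty v^{\alpha-1}\sin v\,dv=\Gamma(\alpha)\sin(\pi\alpha/2)$ (with a second integration by parts needed when $\alpha\ge 1$). You correctly flag this step as the delicate one, but the specific mechanism you describe does not yet close the gap.
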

\begin{proof}
(i) and (iv) follow directly from the definition. (ii) is proved in the introduction of  \cite{Zo86}. (iii) is a consequence of (ii).
\end{proof}

\subsection{Proof of Corollary \ref{cor:Cluster2}}
Define $\Lambda_n(\Pa)=\widehat{W}_n(\Pa) -W^* $, the $U$-statistics based on the sample $X_1,\ldots,X_n$, with symmetric kernel
\[
h_{\Pa}(x,x')=D(x,x')\left(\Phi_{\Pa} (x,x')-\Phi_{\Pa^*} (x,x')\right)~.
\]
We denote by $\Lambda(\Pa)=W(\Pa)-W^*$ the expected value of $\Lambda_n(\Pa)$.
The main argument in the following analysis is based on the Hoeffding decomposition. For all partitions $\Pa$, 
\[
\Lambda_n(\Pa)-\Lambda(\Pa) = 2L_n(\Pa)+M_n(\Pa) 
\]
for $L_n(\Pa)=\frac{1}{n}\sum_{i\le n} h^{(1)}(X_i)$ with $h^{(1)}(x)=\esp{h_{\Pa}(X,x)}-\Lambda(\Pa)$ and $M_n(\Pa)$ the $U$-statistics based on the canonical kernel given by $ h^{(2)}(x,x')=h_{\Pa}(x,x')-h^{(1)}(x)-h^{(1)}(x')-\Lambda(\Pa) $. Let $\mathcal{B}$ be a regular partition of $\{1,\ldots,n\}$. For any $B \in \mathcal{B}$, $\Lambda_{B}(\Pa)$ is the $U$-statistics on the kernel $h_{\Pa}$ restricted to the set $B$ and $ \overline{\Lambda}_{B}(\Pa) $ is the median of the sequence $\left(\Lambda_{B}(\Pa)\right)_{B\in \B}$. We define similarly $L_{B}(\Pa)$ and $M_{B}(\Pa)$ on the variables $(X_i)_{i \in B}$. For any $B\in \B$,
\begin{eqnarray*}
\var{\Lambda_B(\Pa)}&=&4\var{L_B(\Pa)}+\var{M_B(\Pa)}\\
&=&\frac{4}{|B|}\var{h^{(1)}(X)}+\frac{2}{|B|(|B|-1)}\var{h^{(2)}(X_1,X_2)}~.
\end{eqnarray*} 
Simple computations show that $\var{h^{(2)}(X_1,X_2)} = 2\var{h^{(1)}(X)}$ and therefore,
\[
\var{\Lambda_B(\Pa)} \le \frac{8}{|B|}\var{h^{(1)}(X)}~.
\]
Moreover,
\begin{eqnarray*}
\var{h^{(1)}(X)} &\le & \espi{X'}{\espi{X}{h_{\Pa}(X,X')}^2}\\
&\le & \espi{X'}{\espi{X}{D(X,X')^2}\espi{X}{\left(\Phi_{\Pa} (X,X')-\Phi_{\Pa^*} (X,X')\right)^2}}\\
&= & \espi{X'}{\espi{X}{D(X,X')^2}\probi{X}{\Phi_{\Pa} (X,X')\neq\Phi_{\Pa^*} (X,X')}}\\
& \le & \sigma^2 \kappa (W(\Pa)-W^*)^{\alpha}
\end{eqnarray*}
where $\EXP_{X}$ (resp. $\EXP_{X'}$) refers to the expectation taken with respect to $X$ (resp. $X'$). Chebyshev's inequality gives, for $r \in (0,1)$,
\[
\prob{\Lambda_B(\Pa) -\Lambda(\Pa) > \sigma (W(\Pa)-W^*)^{\alpha/2}\sqrt{\frac{8\kappa}{r|B|}}} \le r~.
\]
Using again \eqref{eq:stobound} with $r = \frac{1}{4}$, by  $ |B| \ge \frac{n}{128\ceil{\log (N/\delta)}} $, there exists a constant $C$ such that for any $\Pa \in \Pi_K$, with probability at least $1-2\delta/N$,
\[
|\overline{\Lambda}_{\B}(\Pa)-\Lambda(\Pa)| \le C\sigma (W(\Pa)-W^*)^{\alpha/2}\sqrt{\frac{\ceil{\log (N/\delta)}}{n}}~.
\]
This implies by the union bound, that
\[
|\overline{W}_{\B}(\widehat{\Pa})-W(\widehat{\Pa})| \le K\sigma (W(\widehat{\Pa})-W^*)^{\alpha/2}\sqrt{\frac{\ceil{\log (N/\delta)}}{n}}
\]
with probability at least $1-2\delta$. Using  \eqref{eq:split}, we obtain
\[
(W(\widehat{\Pa})-W^*)^{1-\alpha/2} \le 2 K\sigma \sqrt{\frac{\ceil{\log (N/\delta)}}{n}}~,
\]
concluding the proof.

\bibliographystyle{chicago}
\bibliography{biblio}
\end{document}